\newtheorem{thm}{Theorem}[section]
\newtheorem{lem}[thm]{Lemma}
\newtheorem{ques}[thm]{Question}
\theoremstyle{definition}
\numberwithin{equation}{section}
\begin{document}
\title[A dynamical argument for a Ramsey property]{A dynamical argument for a Ramsey property}

\author[E.~Shi]{Enhui Shi}
\address{Soochow University, Suzhou, Jiangsu 215006, China}
\email{ehshi@suda.edu.cn}

\author[H.~Xu]{Hui Xu}
\address{Soochow University, Suzhou, Jiangsu 215006, China}
\email{20184007001@stu.suda.edu.cn}

\begin{abstract}
We show by a dynamical argument that there is a positive integer valued function $q$ defined on positive integer set $\mathbb N$ such that $q([\log n]+1)$ is a super-polynomial with respect to positive $n$ and
\[\limsup_{n\rightarrow\infty} r\left((2n+1)^2, q(n)\right)<\infty,\] where $r(\ ,\ )$ is the opposite-Ramsey number function.
\end{abstract}

\date{\today}

\setcounter{page}{1}

\maketitle

\section{Introduction and preliminaries}

For positive integers $p$ and $q$, we define the \textbf{opposite-Ramsey number} $r(p,q)$ to be the maximal number $k$ for which every edge-coloring of the complete graph $K_q$ with $p$ colors yields a monochromatic complete subgraph of order $k$ (the \textbf{order} of a graph means the number of its vertices).

The following is implied by the well-known Ramsey's theorem.

\begin{thm}
Let $p$ be a fixed positive integer. Then \[\liminf_{q\rightarrow\infty} r\left(p, q\right)=\infty.\]
\end{thm}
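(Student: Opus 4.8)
The plan is to derive this immediately from the finite (multicolor) version of Ramsey's theorem, which guarantees that for every positive integer $k$ and every number of colors $p$ there is an integer $R_p(k)$ such that every edge-coloring of $K_{R_p(k)}$ with $p$ colors contains a monochromatic complete subgraph of order $k$.

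First I would fix an arbitrary positive integer $k$ and put $N=R_p(k)$. Given any $q\geq N$ and any edge-coloring of $K_q$ with $p$ colors, restrict this coloring to the subgraph induced by an arbitrary set of $N$ vertices; this is a $p$-edge-coloring of a copy of $K_N$, so by the choice of $N$ it contains a monochromatic complete subgraph of order $k$, and that subgraph sits inside the original $K_q$ (a monochromatic clique in an induced subgraph remains a monochromatic clique in the ambient graph). Hence every $p$-edge-coloring of $K_q$ yields a monochromatic complete subgraph of order $k$, which by the definition of the opposite-Ramsey number means $r(p,q)\geq k$ for all $q\geq N$.

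It follows that $\liminf_{q\to\infty} r(p,q)\geq k$, and since $k$ was an arbitrary positive integer, letting $k\to\infty$ gives $\liminf_{q\to\infty} r(p,q)=\infty$.

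I do not expect a genuine obstacle: the statement is essentially a restatement of Ramsey's theorem, and the only points requiring a little care are unwinding the definition of $r(p,q)$ correctly (a lower bound on $r(p,q)$ is the useful direction here) and the passage from $K_N$ to all larger $K_q$ via restriction to an induced subgraph. One could alternatively first observe that $q\mapsto r(p,q)$ is non-decreasing, so that the $\liminf$ is in fact a limit, but this refinement is not needed for the argument.
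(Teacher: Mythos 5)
Your proof is correct and is precisely the standard deduction from the finite multicolor Ramsey theorem that the paper itself invokes (the paper states the result as ``implied by the well-known Ramsey's theorem'' without writing out the details). The restriction-to-an-induced-$K_N$ step and the unwinding of the definition of $r(p,q)$ are exactly the points that need to be made explicit, and you have handled both correctly.
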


One may expect that if $p=p(n)$ and $q=q(n)$ are positive integer valued functions defined on  $\mathbb N$ and the speed of $q(n)$ tending to infinity is much faster than that of $p(n)$ as $n$ tends to infinity, then we still have \[\liminf_{n\rightarrow\infty} r\left(p(n), q(n)\right)=\infty.\]

The purpose of the paper is to show by a dynamical argument that this is not true in general even if $p(n)$ is a polynomial and $q([\log n]+1)$ is a super-polynomial.
By a \textbf{super-polynomial}, we mean a function $f: \mathbb{N}\rightarrow \mathbb{R}$ such that  for any polynomial $g(n)$, \[\liminf_{n\rightarrow\infty}\frac{|f(n)|}{|g(n)|}=\infty.\]

Let $(X,d)$ be a compact metric space. For any $\varepsilon>0$, let $N(\varepsilon)$ denote the minimal number of subsets of diameter at most $\varepsilon$ needed to cover $X$. The \textbf{lower box dimension} of $X$ is defined to be

\begin{equation}
\underline{\dim}_B(X,d)=\liminf_{\varepsilon\rightarrow 0} \frac{\log N(\varepsilon)}{\log1/\varepsilon}.
\end{equation}

For a subset $E$ of $X$ and $\varepsilon>0$, we say $E$ is $\varepsilon$-\textbf{separated} if for any distinct $x,y\in X$, $d(x,y)\geq \varepsilon$. Let $S(\varepsilon)$ denote the cardinality of a maximal $\varepsilon$-separated subset of $X$. It is easy to verify $N(\varepsilon)\leq S(\varepsilon)\leq N(\varepsilon/2)$. Thus

\begin{equation}
\underline{\dim}_B(X,d)=\liminf_{\varepsilon\rightarrow 0} \frac{\log S(\varepsilon)}{\log1/\varepsilon}.
\end{equation}

Furthermore, it is easy to see that

\begin{equation}
\underline{\dim}_B(X,d)=\liminf_{n\rightarrow \infty} \frac{\log S(1/n)}{\log n}.
\end{equation}

We use $\dim(X)$ to denote the topological dimension of $X$. It is well known that the topological dimension of $X$ is always no greater than its lower box dimension with respect to any compatible metric.

A continuous action $G\curvearrowright X$ of group $G$ on $X$ is said to be \textbf{expansive} if there exists $c>0$ such that for any two distinct points $x,y\in X$, $\sup_{g\in G}d(gx,gy)> c$. For $v=(v_1,\cdots,v_k)\in \mathbb{Z}^k$, let $|v|$ denote $\max\{|v_1|,\cdots,|v_k|\}$.

The following lemma is due to  T. Meyerovitch and M. Tsukamoto.

\medskip
\begin{lem}~\cite[Lemma 4.4]{MT}\label{compatible metric}
Let $k$ be a positive integer and $T:\mathbb{Z}^k\times X\rightarrow X$ be a continuous action of $\mathbb Z^k$ on a compact metric space $(X,d)$. If the action is expansive, then there exist $\alpha>1$ and a compatible metric $D$ on $X$ such that for any positive integer $n$ and any two distinct points $x,y\in X$ satisfying $D(x,y)\geq \alpha^{-n}$, we have
\[\max_{v\in\mathbb{Z}^k, |v|\leq n} D(T^{v}x,T^{v}y)\geq \frac{1}{4\alpha}.\]
\end{lem}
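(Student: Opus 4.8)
The plan is to construct $D$ not from an explicit formula but by metrizing a canonical decreasing sequence of neighbourhoods of the diagonal that records ``how long the orbit of a pair stays undisturbed,'' and then to read the required inequality off the comparison between $D$ and that sequence. Throughout, $c$ denotes the expansive constant and I abbreviate $d_W(x,y):=\max_{|v|\le W}d(T^v x,T^v y)$. The first step is a quantitative (generator) form of expansiveness: there is a positive integer $N$ such that $d_N(x,y)\le c$ implies $d(x,y)\le c/2$ for all $x,y$. This follows by a compactness argument: if it failed for every $N$, one would extract from witnesses a limit pair $(x,y)$ with $x\ne y$ and $\sup_v d(T^v x,T^v y)\le c$, contradicting expansiveness.

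Next, with threshold $t:=c/2$ and windows $M_j:=jN$, I would set
\[ U_j:=\{(x,y): d_{M_j}(x,y)\le t\}\qquad(j\ge 0,\ U_0:=X\times X). \]
The crucial step, and the one I expect to be the main obstacle, is the composition estimate $U_{j+1}\circ U_{j+1}\subseteq U_j$. From $(x,y),(y,z)\in U_{j+1}$ the triangle inequality only yields $d_{M_{j+1}}(x,z)\le 2t=c$; the point is that $2t$ equals the expansive constant, so the generator estimate applies around each $w$ with $|w|\le M_j$ (its window of radius $N$ sits inside $M_{j+1}$) and contracts the bound back to $t$, giving $(x,z)\in U_j$. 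This is exactly why the threshold must be $c/2$ and not $c$: the factor $2$ lost to the triangle inequality is precisely the factor the generator regains. I would also note that $\bigcap_j U_j=\Delta$ by expansiveness, and that each $U_j$ is a closed $d$-neighbourhood of the diagonal.

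I would then apply Frink's metrization lemma to $\rho(x,y):=2^{-n(x,y)}$, where $n(x,y):=\sup\{j:(x,y)\in U_j\}$ (so $\rho=0$ exactly on $\Delta$). The composition estimate gives $n(x,z)\ge\min(n(x,y),n(y,z))-1$, equivalently $\rho(x,z)\le 2\max(\rho(x,y),\rho(y,z))$, which is Frink's hypothesis; hence there is a metric $D$ with $\tfrac12\rho\le D\le\rho$. Setting $\alpha:=2^{1/N}>1$ one has $\rho(x,y)=\alpha^{-M_{n(x,y)}}$. Compatibility of $D$ with $d$ follows from this two–sided bound together with the facts that each $U_j$ is a $d$-neighbourhood of $\Delta$ and that, conversely (by compactness and $\bigcap_j U_j=\Delta$), every $d$-neighbourhood of $\Delta$ contains some $U_j$.

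Finally I would verify the expansion inequality. Given distinct $x,y$ with $D(x,y)\ge\alpha^{-n}$, the bound $D\le\rho$ forces $j_0:=n(x,y)$ to satisfy $M_{j_0}\le n$; since $(x,y)\notin U_{j_0+1}$ there is $v_0$ with $|v_0|\le M_{j_0+1}\le n+N$ and $d(T^{v_0}x,T^{v_0}y)>t$. Clamping $v_0$ coordinatewise into the cube $[-n,n]^k$ produces $w$ with $|w|\le n$ and $|v_0-w|\le N=M_1$; then the orbit of $(T^w x,T^w y)$ already separates beyond $t$ within the window $M_1$, so $(T^w x,T^w y)\notin U_1$, whence $n(T^w x,T^w y)=0$ and $D(T^w x,T^w y)\ge\tfrac12\rho=\tfrac12\ge\tfrac{1}{4\alpha}$. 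This gives the claimed bound (indeed with room to spare). The only genuinely delicate ingredient is the composition estimate; the metrization and the clamping are then routine bookkeeping.
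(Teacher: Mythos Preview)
The paper does not give its own proof of this lemma; it is quoted from \cite[Lemma 4.4]{MT} and used as a black box, so there is nothing in the paper to compare your argument against.

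That said, your proof is correct. The generator estimate, the composition inequality $U_{j+1}\circ U_{j+1}\subseteq U_j$ (which, as you rightly emphasize, hinges on choosing the threshold $t=c/2$ so that the factor $2$ lost to the triangle inequality is exactly recovered by the generator step), the Frink-type metrization, and the final clamping argument are all sound. One small remark: depending on which form of Frink's lemma you invoke, the comparison constant may come out as $\tfrac14\rho\le D$ rather than $\tfrac12\rho\le D$; but since $\alpha>1$, either bound gives $D(T^{w}x,T^{w}y)\ge \tfrac14>\tfrac{1}{4\alpha}$, so the stated conclusion is unaffected. A cosmetic point: with $M_0=0$ your formula gives $U_0=\{d\le t\}$, not $X\times X$; your override $U_0:=X\times X$ is harmless (the inclusion $U_1\circ U_1\subseteq U_0$ is then trivial), but it would be tidier simply to index from $j\ge 1$.
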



\begin{lem}\label{sup-poly}
If $(X,d)$ is a compact metric space of infinite dimension, then $S(1/n)$ is a super-polynomial with respect to variable $n$.
\end{lem}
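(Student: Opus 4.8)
The plan is to deduce the statement directly from the inequality $\dim(X)\le\underline{\dim}_B(X,d)$ together with the reformulation (1.3) of the lower box dimension. First I would note that, since $X$ has infinite topological dimension, the inequality $\dim(X)\le\underline{\dim}_B(X,d)$ forces $\underline{\dim}_B(X,d)=\infty$; by (1.3) this is precisely the assertion that
\[
\liminf_{n\rightarrow\infty}\frac{\log S(1/n)}{\log n}=\infty.
\]

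Next I would translate this into the growth condition defining a super-polynomial. Let $g$ be an arbitrary polynomial, which we may assume to be nonzero, and put $M=\deg g+1$. Since the $\liminf$ above equals $+\infty$, there is an $N$ such that $\log S(1/n)>M\log n$, that is $S(1/n)>n^{\deg g+1}$, for every $n\ge N$. On the other hand there is a constant $C>0$ with $|g(n)|\le C\,n^{\deg g}$ for all large $n$, so for all sufficiently large $n$ we get
\[
\frac{S(1/n)}{|g(n)|}\ \ge\ \frac{n^{\deg g+1}}{C\,n^{\deg g}}\ =\ \frac{n}{C},
\]
which tends to $\infty$ as $n\to\infty$. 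Hence $\liminf_{n\rightarrow\infty} S(1/n)/|g(n)|=\infty$, and since $g$ was arbitrary, $S(1/n)$ is a super-polynomial in $n$.

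I do not anticipate a real obstacle: the only subtlety is that the definition of super-polynomial is phrased with a $\liminf$, so one must know that $S(1/n)>n^{\deg g+1}$ holds for all sufficiently large $n$ rather than merely along a subsequence — but this is exactly what $\underline{\dim}_B(X,d)=\infty$ delivers, because the lower box dimension is itself defined by a $\liminf$. The inequality $\dim(X)\le\underline{\dim}_B(X,d)$ is quoted as a known fact in the text, so it requires no separate justification here.
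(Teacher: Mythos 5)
Your argument is correct and follows the same route as the paper: both deduce $\underline{\dim}_B(X,d)=\infty$ from $\dim(X)\le\underline{\dim}_B(X,d)$, read this off via the reformulation of the lower box dimension as $\liminf_{n\to\infty}\log S(1/n)/\log n$, and then convert that into the super-polynomial growth condition. You merely spell out in more detail the final translation (from the logarithmic $\liminf$ being infinite to $S(1/n)/|g(n)|\to\infty$ for an arbitrary polynomial $g$), which the paper compresses into the single line that $\liminf_{n\to\infty} S(1/n)/n^k=\infty$ for every positive integer $k$.
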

\begin{proof}
Since $\dim(X)\leq \underline{\dim}_B(X,d)$, we have
\begin{equation}
\underline{\dim}_B(X,d)=\liminf_{n\rightarrow \infty} \frac{\log S(1/n)}{\log n}=\infty.
\end{equation}
Thus, for any positive integer $k$, $\liminf_{n\rightarrow \infty} \frac{S(1/n)}{n^k}=\infty$.
\end{proof}

\section{Main results}
For a positive real number $x$, we use $[x]$ to denote its integer part.
\begin{thm}\label{main thm}
There is a function $q:\mathbb{N}\rightarrow \mathbb{R}$ such that $q([\log n]+1)$ is a super-polynomial and
\[\limsup_{n\rightarrow\infty} r\left((2n+1)^2, q(n)\right)<\infty.\]
\end{thm}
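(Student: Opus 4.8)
The plan is to build the required colorings directly out of an expansive $\mathbb{Z}^{2}$-action on an infinite-dimensional compactum: Lemma~\ref{compatible metric} converts expansiveness into a quantitative separation estimate along the orbit, which produces a $(2n+1)^{2}$-coloring of a large complete graph with only bounded monochromatic cliques, and Lemma~\ref{sup-poly} guarantees that the number of vertices one can reach this way grows super-polynomially.

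First I would fix an expansive continuous action $T:\mathbb{Z}^{2}\times X\to X$ on a compact metric space $(X,d)$ with $\dim(X)=\infty$. The existence of such an action is the essential dynamical input of the whole argument; it is crucial here that the acting group has at least two commuting generators, since by a theorem of Ma\~n\'e a single expansive homeomorphism already forces finite topological dimension, whereas infinite-dimensional expansive $\mathbb{Z}^{2}$-systems do exist. Applying Lemma~\ref{compatible metric} produces $\alpha>1$ and a compatible metric $D$ with the stated $n$-step separation property; from now on all separated sets are measured with respect to $D$. Since $D$ is compatible with the topology, $\underline{\dim}_{B}(X,D)\ge\dim(X)=\infty$, so $m\mapsto S(1/m)$ is a super-polynomial by Lemma~\ref{sup-poly}. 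I then set $q(n)=S(\alpha^{-n})$, which is a positive integer because $X$ is compact.

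The combinatorial core is as follows. For each $n$, choose a maximal $\alpha^{-n}$-separated set $E_{n}\subseteq X$, so that $|E_{n}|=S(\alpha^{-n})=q(n)$, and take $E_{n}$ as the vertex set of $K_{q(n)}$. For distinct $x,y\in E_{n}$ we have $D(x,y)\ge\alpha^{-n}$, so Lemma~\ref{compatible metric} furnishes some $v\in\mathbb{Z}^{2}$ with $|v|\le n$ and $D(T^{v}x,T^{v}y)\ge\frac{1}{4\alpha}$; I color the edge $\{x,y\}$ by one such $v$. As $\{v\in\mathbb{Z}^{2}:|v|\le n\}$ has $(2n+1)^{2}$ elements, this uses at most $(2n+1)^{2}$ colors. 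If $x_{1},\dots,x_{k}\in E_{n}$ span a monochromatic clique of common color $v$, then $T^{v}x_{1},\dots,T^{v}x_{k}$ are pairwise $\frac{1}{4\alpha}$-separated in $(X,D)$, hence $k\le S\!\left(\tfrac{1}{4\alpha}\right)=:C$, a constant independent of $n$. Since $r(p,q)$ is non-increasing in $p$, it follows that $r\!\left((2n+1)^{2},q(n)\right)\le C$ for every $n$, and therefore $\limsup_{n\to\infty}r\!\left((2n+1)^{2},q(n)\right)\le C<\infty$.

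It remains to see that $q([\log n]+1)$ is a super-polynomial. With $c=\log\alpha>0$, the inequality $[\log n]+1\ge\log n$ gives $\alpha^{-([\log n]+1)}\le\alpha^{-\log n}=n^{-c}$, so by monotonicity of $S$ in its argument, $q([\log n]+1)=S\!\left(\alpha^{-([\log n]+1)}\right)\ge S(n^{-c})\ge S\!\left(1/\lfloor n^{c}\rfloor\right)$. Since $S(1/m)$ is a super-polynomial, $\lfloor n^{c}\rfloor\to\infty$, and $\lfloor n^{c}\rfloor^{k}/n^{ck}\to 1$, a routine substitution shows that $S(1/\lfloor n^{c}\rfloor)$, and hence $q([\log n]+1)$, eventually dominates every polynomial in $n$. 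The one genuine obstacle is the very first step — producing (or citing) an expansive $\mathbb{Z}^{2}$-action on a compact metric space of infinite topological dimension; once that is available, everything afterwards is bookkeeping with Lemmas~\ref{compatible metric} and~\ref{sup-poly}.
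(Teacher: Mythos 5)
Your proposal is correct and follows essentially the same route as the paper: the same expansive $\mathbb{Z}^2$-action on an infinite-dimensional compactum, the same coloring of a maximal $\alpha^{-n}$-separated set by displacement vectors $v$ with $|v|\le n$ via Lemma~\ref{compatible metric}, and the same appeal to Lemma~\ref{sup-poly} for the growth of $q$. The only differences are presentational: you bound every monochromatic clique directly by the constant $S\left(\frac{1}{4\alpha}\right)$ (and spell out the super-polynomial verification), whereas the paper phrases the final step as a contradiction with the compactness of $X$.
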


\begin{proof}
Let $T: \mathbb{Z}^2\times X\rightarrow X$ be an expansive continuous action on a compact metric space $(X,d)$ of infinite dimension (see \cite{SZ} where an expansive $\mathbb{Z}^2$-action on $\mathbb{T}^\infty$ was constructed). By Lemma \ref{compatible metric}, there exist $\alpha>1$ and a compatible metric $D$ on $X$ such that for any positive integer $n$ and any two distinct points $x,y\in X$ with $D(x,y)\geq \alpha^{-n}$,
\[\max_{v\in\mathbb{Z}^2, |v|\leq n} D(T^{v}x,T^{v}y)\geq \frac{1}{4\alpha}.\]

For each $n\in\mathbb{N}$, let $V_n$ be a maximal $\alpha^{-n}$-separated set of $ (X, D)$. Hence $|V_n|=S(\alpha^{-n})$.
Let $G_n$ be the complete graph $K_{S(\alpha^{-n})}$ whose vertex set is $V_n$. Now we use the color set $C_n=\{v\in\mathbb{Z}^2: |v|\leq n\}$ to color the edges of $G_n$. Since $V_n$ is $\alpha^{-n}$-separated, for any two distinct points $x,y\in V_n$, $D(x,y)\geq \alpha^{-n}$. By Lemma \ref{compatible metric}, there exists $v\in C_n$ such that $D(T^{v}x,T^{v}y)\geq \frac{1}{4\alpha}$. Then we color the edge $\{x,y\}$ by $v$. By the definition of opposite-Ramsey number, there is a monochromatic complete subgraph $H_n$ of order $r\left((2n+1)^2, S(\alpha^{-n})\right)$.

 By Lemma \ref{sup-poly}, $S(1/n)$ is a super-polynomial. Let $q(n)=S(\alpha^{-n})$. Thus $q([\log n]+1)$ is a super-polynomial with respect to positive $n$. Assuming that the conclusion of the Theorem is false, we have
\[\limsup_{n\rightarrow\infty} r\left((2n+1)^2, q(n)\right)=\infty.\]
Therefore, there is an increasing subsequence $(n_i)$ of positive integers such the the sequence of orders of $H_{n_i}$ is unbounded. Since $H_{n_i}$ is monochromatic,  there exists $v_{n_i}\in C_{n_i}$ such that the image of vertex set of $H_{n_i}$ under $T^{v_{n_i}}$ is $\frac{1}{4\alpha}$-separated.  These imply that there are arbitrarily large $\frac{1}{4\alpha}$-separated subsets of $X$, which contradicts the compactness of $X$. Thus we complete the proof.
\end{proof}

\section{Comparison with Classical Ramsey number}
For any positive integers $k$ and $g$, the \textbf{Ramsey number} $R_{g}(k)$ is defined to be the minimal number $n$ for which every edge-coloring of the complete graph $K_n$ with $g$ colors yields a monochromatic complete subgraph of order $k$.

By Corollary 3 of Greenwood and Gleason in \cite{GG}, $R_{g}(k)$ has an upper bound $g^{gk}$. In \cite{LR} Lefmann and R\"{o}dl obtained a lower bound $2^{\Omega(gk)}$ for $R_{g}(k)$. Thus
\begin{equation}\label{bound for R}
2^{\Omega((2n+1)^2k)}\leq R_{(2n+1)^2}(k)\leq \left((2n+1)^2\right)^{(2n+1)^2k}.
\end{equation}
Suppose $r\left((2n+1)^2, q(n)\right)=r<\infty$. Then it implies that
\begin{itemize}
  \item [(1)] every edge-coloring of complete graph $K_{q(n)}$ with $(2n+1)^2$ colors yields a monochromatic complete subgraph of order $r$, hence
      \begin{equation}\label{eq of upper bound}
      q(n)\geq R_{(2n+1)^2}(r);
      \end{equation}
  \item [(2)] there exists an edge-coloring of $K_{q(n)}$ with $(2n+1)^2$ colors such that there is no monochromatic complete subgraph of order $r+1$, hence
      \begin{equation}
      q(n)\leq R_{(2n+1)^2}(r+1).
      \end{equation}
\end{itemize}

Thus $q(n)$ gives a lower bound of $R_{(2n+1)^2}(r+1)$ and an upper bound of $R_{(2n+1)^2}(r)$. By Theorem \ref{main thm}, every expansive $\mathbb{Z}^2$-action on a compact metric space of infinite dimension gives rise to such a $q(n)$. In addition, there is a positive integer $r$ and an increasing subsequence $(n_i)$ of positive integers such that for any $i\in\mathbb{N}$, $r\left((2n_i+1)^2, q(n_i)\right)=r$. Therefore, we obtain a lower bound of $R_{(2n_i+1)^2}(r+1)$ and an upper bound of $R_{(2n_i+1)^2}(r)$ for each $i\in\mathbb{N}$.

If $q(\log n)$ is a super-polynomial, then we claim that for any $A\geq 0$,
\begin{equation}\label{eq of q}
\liminf_{n\rightarrow\infty} \frac{q(n)}{A^n}=\infty.
\end{equation}
In fact, take a positive integer $m$ such that $e^m\geq A$. Then
\begin{eqnarray*}
\liminf_{n\rightarrow\infty} \frac{q(n)}{A^n}\geq \liminf_{n\rightarrow\infty} \frac{q(n)}{e^{mn}}
=\liminf_{n\rightarrow\infty} \frac{q(\log (e^n))}{(e^n)^m}=\infty.
\end{eqnarray*}

The lower bound of $R_{(2n+1)^2}(r+1)$ obtained by (\ref{bound for R}) is $2^{\Omega\left((2n+1)^2(r+1)\right)}$ which is also faster than any exponential growth. If there is an expansive $\mathbb{Z}$-action on a compact metric space of infinite dimension, then we can get a lower bound for $R_{2n+1}(r+1)$ which is faster than the classical bound $2^{\Omega\left((2n+1)(r+1)\right)}$. Unfortunately, in \cite{M} Ma\~{n}\'{e} showed that such action does not exist. If we can construct an expansive $\mathbb{Z}^2$-action on a compact metric space of infinite dimension such that the condition $D(x,y)\geq \alpha^{-n}$ in  Lemma \ref{compatible metric} can be replaced by $D(x,y)\geq \alpha^{-n^2}$, then we can show that $q(n)$ obtained in Theorem \ref{main thm} satisfies that $q([\sqrt{\log n}]+1)$ is a super-polynomial. Then it satisfies $\liminf_{n\rightarrow\infty} \frac{q(n)}{A^{n^2}}=\infty$. Hence $q(n)$ is faster than the classical lower bound $2^{\Omega\left((2n+1)^2(r+1)\right)}$. Therefore, we leave the following question.
\begin{ques}\label{question}
Is there an expansive $\mathbb{Z}^2$-action on a compact metric space $(X,d)$ of infinite dimension and $\alpha>1$ such that for any positive integer $n$ and any two distinct points $x,y\in X$ satisfying $d(x,y)\geq \alpha^{-n^2}$, we have
\[\max_{v\in\mathbb{Z}^2, |v|\leq n} d(T^{v}x,T^{v}y)\geq \frac{1}{4\alpha}.\]
\end{ques}
A positive answer to Question \ref{question} can give a better estimate of the lower bound of $R_{(2n_i+1)^2}(r+1)$, where $(n_i)$ and $r$ come from the system. By (\ref{eq of upper bound}), a negative answer also gives a better estimate of the upper bound of $R_{(2n_i+1)^2}(r)$.

Finally, we remark that the above comparison between $q(n)$ and the bound of Ramsey is only for a subsequence of positive integers. However, dealing with a concrete system we may obtain more information and can get a special edge-coloring of $K_{q(n)}$ as the proof of Theorem \ref{main thm}. Our method may give a new direction to estimate the bounds of Ramsey numbers and construct edge-colorings of big graphs.

\subsection*{Acknowledgements}
We would like to thank Professor Bingbing Liang and Professor Xin Wang for their helpful comments.

\end{document}